\let\footnote=\endnote
\newtheorem{lem}{Lemma}
\newtheorem{theo}{Theorem}
\title{Proof of the integrality of Somos-5}
\author{Anton Enste}
\begin{document}
\maketitle

\textbf{Abstract}\\
In this paper, a simple proof that shows that the Somos-5 sequence
produces only natural numbers is given. We also give simple proof that each
$a_n$ in the Somos-5 sequence is relatively prime to its four
predecessors. The manner in which the integrality is proven is taken
from the 1992 proof for the integrality of the Somos-4 sequence by
Janice. L. Malouf.\footnote{Janice L.\ Malouf. December 1992. An integer sequence from a rational recursion. Volume 110, Issue
  1-3. Seite
  257-261. \\\url{https://www.sciencedirect.com/science/article/pii/0012365X9290714Q?via3Dihub}}
The manner in which it is proven that each $a_n$ is relatively prime
to its four predecessors is taken from a paper from Michael J. Crabb, who proved that
each $a_n$ in the Somos-5 sequence is relatively prime to its two
predecessors.\footnote{Michael J. Crabb, An elementary proof that
    the terms of the standard Somos 5 sequence are integers
    \\\url{https://fulbright.uark.edu/departments/math/_resources/directory/johnduncan-files/M/Somos5-Draft1.pdf}
    Draft1.pdf}
\\\\
\textbf{Definition:}
The Somos-5 sequence named after Micheal Somos, who was first to define it, is given by the following rational recursion
formula with initial values $a_0=a_1=a_2=a_3=a_4=1$:
\begin{equation}\label{def;def}
  a_n=\frac{a_{n-1}a_{n-4}+a_{n-2}a_{n-3}}{a_{n-5}}
\end{equation}
Its first terms, starting with $a_0$ are:
1,1,1,1,1,2,3,5,11,37,83,274,...
\\\\
\begin{lem} Euclid's
  Lemma\footnote{https://mathworld.wolfram.com/EuclidsLemma.html} If
  $d|mn$ and $\text{gcd}(d,m)$ than one finds $d|n$
\end{lem}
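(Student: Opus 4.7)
The plan is to reduce Euclid's Lemma to Bézout's identity, which states that for any integers $d$ and $m$ there exist integers $x$ and $y$ with $dx + my = \gcd(d,m)$. Assuming this tool, the statement follows in essentially one line, so the real work is in establishing Bézout.

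First I would write down the hypothesis $\gcd(d,m)=1$ and invoke Bézout to obtain $x,y\in\mathbb{Z}$ with $dx+my=1$. Multiplying both sides by $n$ gives $dxn+myn=n$. The term $dxn$ is divisible by $d$ by inspection, and the term $myn=(mn)y$ is divisible by $d$ because $d\mid mn$ by hypothesis. Hence $d$ divides the sum $n$, which is the desired conclusion.

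To establish Bézout, I would introduce the set $S=\{d u+m v : u,v\in\mathbb{Z},\ du+mv>0\}$ and apply the well-ordering principle to pick its least element $g=dx+my$. A short division-with-remainder argument then shows $g\mid d$ and $g\mid m$: if $d=qg+r$ with $0\le r<g$, then $r=d-qg=d(1-qx)+m(-qy)$ lies in $S\cup\{0\}$, so minimality forces $r=0$, and symmetrically for $m$. Conversely any common divisor of $d$ and $m$ divides $g$, so $g=\gcd(d,m)=1$.

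The main obstacle is purely expository rather than mathematical: the result is classical and very short, but the division-with-remainder step inside the Bézout argument is the one place where one must actually do something other than unfold definitions. Since the paper uses the lemma only as a black-box tool for coprimality arguments about Somos-5, I would keep the proof to a few lines and cite a standard number-theory reference for Bézout rather than reproducing its proof in full.
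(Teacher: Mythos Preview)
Your argument is correct and is the standard textbook proof of Euclid's Lemma via B\'ezout's identity; the division-with-remainder minimality step for B\'ezout is also fine.

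However, the paper does not actually prove this lemma at all. Lemma~1 is stated with a footnote pointing to the MathWorld entry and is then used as a black box in the proofs of Lemmas~2 and~5. So there is nothing to compare at the level of proof strategy: you supply a complete (and entirely standard) proof where the paper simply cites the result. If anything, your closing remark---that one could just cite a reference rather than reproduce B\'ezout in full---is exactly what the paper does, only more tersely.
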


\begin{lem} For $a,x,y \in \mathbb{N}$ we have:
  $\text{gcd}(a,x)=\text{gcd}(a,y)=1 \Leftrightarrow
  \text{gcd}(a,xy)=1$ \label{lem;zwei}
\end{lem}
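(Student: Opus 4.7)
I would split the biconditional into its two implications and treat them separately. The ``$\Leftarrow$'' direction is essentially trivial: assuming $\gcd(a,xy)=1$, any common divisor of $a$ and $x$ divides both $a$ and $xy$, hence divides $\gcd(a,xy)=1$; so $\gcd(a,x)=1$, and the symmetric argument with $y$ in place of $x$ gives $\gcd(a,y)=1$. I would dispatch this in a single sentence.

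For the ``$\Rightarrow$'' direction, I would name $d=\gcd(a,xy)$ and aim to show $d=1$ by feeding the divisibility $d\mid xy$ into Euclid's Lemma (Lemma~1). To apply Lemma~1 I first need $\gcd(d,x)=1$: any common divisor of $d$ and $x$ divides $a$ (since $d\mid a$) as well as $x$, hence divides $\gcd(a,x)=1$. Euclid's Lemma then yields $d\mid y$, and combined with $d\mid a$ this gives $d\mid\gcd(a,y)=1$, so $d=1$, as desired.

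I do not anticipate any real obstacle. The one substantive step is the preliminary observation $\gcd(d,x)=1$, which has to be made explicit before Euclid's Lemma can be invoked; everything else is a direct appeal either to Lemma~1 or to the universal property of the gcd. The proof uses nothing beyond what the paper has already stated.
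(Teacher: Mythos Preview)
Your proposal is correct and follows essentially the same route as the paper: for the forward direction you take a common divisor $d$ of $a$ and $xy$, establish $\gcd(d,x)=1$ (the paper does this by contradiction, you do it directly), invoke Euclid's Lemma to get $d\mid y$, and conclude $d=1$ from $\gcd(a,y)=1$; the backward direction is handled by the same trivial observation in both. The only differences are cosmetic---you name $d=\gcd(a,xy)$ rather than an arbitrary common divisor, and you treat $\Leftarrow$ before $\Rightarrow$.
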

\begin{proof}
  First we prove that:
  \begin{equation*}
    \text{gcd}(a,x)=\text{gcd}(a,y)=1\Rightarrow \text{gcd}(a,xy)=1.
  \end{equation*}
  We assume that there is a number $d$, for which we have $d|a$ and
  $d|xy$.
  Now we have to prove that $d=1$. We want to use Euclid's Lemma, so
  we have to show that $\text{gcd}(d,x)=1$. We can do this with a
  proof by contradiction. So we assume that:
  \begin{equation*}
    \text{gcd} (d,x)=e>1.\\
  \end{equation*}

  We know:
  \begin{equation*}
    \begin{split}
      e|d, e|x \text{ and } e|a \; (\text{because } e|d \text{ and } d|a),\\
      \text{we conclude: }  e|\text{gcd} (a,x)=1,
    \end{split}
  \end{equation*}
  so $e=1$. Now we can use Euclid's Lemma. We have:
  \begin{equation*}
    d|xy \text{ and }\text{gcd} (d,x)=1,
  \end{equation*}
  so we know that $d|y$. We now know the following:
  \begin{equation*}
    \begin{split}
      d|a,\; (\text{because of the definition of $d$})\text{ and }\;d|y,\\
      \text{ so we conclude: }d=1,\text{ because }\text{gcd} (a,y)=1.
    \end{split}
  \end{equation*}
  \\\\
  Now we prove the other direction:
  \begin{equation*}
    \text{gcd}(a,xy)=1 \Rightarrow \text{gcd}(a,x)=\text{gcd}(a,y)=1
  \end{equation*}
  This is easier to prove. If there was a divisor $d>1$, which divided
  $a$ and $x$ or $a$ and $y$, we would have:
  \begin{equation*}
    \text{gcd} (a,xy)\neq 1
  \end{equation*}
\end{proof}

\begin{lem} For $a,b,x,y \in \mathbb{N}$ we have:\\\label{lem;drei}
  $\text{gcd}(a,x)=\text{gcd}(a,y)=\text{gcd}(b,x)=\text{gcd}(b,y)=1
  \Leftrightarrow \text{gcd}(ab,xy)=1$
\end{lem}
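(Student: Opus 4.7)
The plan is to reduce this lemma to two applications of Lemma \ref{lem;zwei}. The statement is symmetric in how it splits the two sides of the $\gcd$, and Lemma \ref{lem;zwei} already tells us how to merge two coprimality conditions on the right-hand argument; by invoking it twice, once in each slot, we should assemble the full statement.

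For the forward direction I would first group the four hypotheses by their first argument. From $\gcd(a,x)=\gcd(a,y)=1$, Lemma \ref{lem;zwei} gives $\gcd(a,xy)=1$, and from $\gcd(b,x)=\gcd(b,y)=1$ the same lemma gives $\gcd(b,xy)=1$. Using the symmetry $\gcd(u,v)=\gcd(v,u)$, these can be rewritten as $\gcd(xy,a)=\gcd(xy,b)=1$, so a second application of Lemma \ref{lem;zwei} (now with $xy$ playing the role of $a$) yields $\gcd(xy,ab)=1$, i.e.\ $\gcd(ab,xy)=1$.

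For the backward direction I would mimic the short contradiction argument at the end of the proof of Lemma \ref{lem;zwei}. If some $d>1$ divided $a$ and $x$, then $d\mid ab$ and $d\mid xy$, contradicting $\gcd(ab,xy)=1$; the three remaining cases $(a,y)$, $(b,x)$, $(b,y)$ are dispatched identically.

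I do not expect a genuine obstacle here — the whole argument is essentially bookkeeping on top of Lemma \ref{lem;zwei}. The only point that deserves a brief explicit mention is the symmetry of $\gcd$ in its arguments, since Lemma \ref{lem;zwei} as stated keeps the first slot fixed and splits the second, whereas in the decisive second application we need to split the first slot.
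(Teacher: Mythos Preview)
Your proposal is correct and follows essentially the same route as the paper: two applications of Lemma~\ref{lem;zwei} for the forward direction (first collapsing $x,y$ into $xy$, then $a,b$ into $ab$) and the immediate divisor argument for the backward direction. Your explicit remark about the symmetry of $\gcd$ in the second application is actually a point the paper glosses over when it writes $\gcd(a,z)=\gcd(b,z)=1\Rightarrow\gcd(ab,z)=1$.
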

\begin{proof} $\Rightarrow$\\
With Lemma \ref{lem;zwei}, we can prove:
\begin{align*}
  &\text{gcd}(a,x)=\text{gcd}(a,y)=1\Rightarrow\text{gcd}(a,xy)=1\\
  \text{and }&\text{gcd}(b,x)=\text{gcd}(b,y)=1\Rightarrow
               \text{gcd}(b,xy)=1
\end{align*}\\\\
Now we substitute $xy=z$:
\begin{equation*}
  \text{gcd}(a,z)=\text{gcd}(b,z)=1
\end{equation*}
Now we can again use Lemma \ref{lem;zwei} and show that
$\text{gcd}(ab,z)=1.$
Because of the substitution $z=xy$, we have $\text{gcd}(ab,xy)=1$.
\\\\
$\Leftarrow$\\
Now it is the same as in Lemma \ref{lem;zwei}: If there was a divisor
which divided $a$ and $x$ or $a$ and $y$ or $b$ and $x$ or $b$ and
$y$, we would have:
\begin{equation*}
  \text{ggT}(ab,xy)\neq 1
\end{equation*}
\end{proof}

\begin{lem} \label{lem;vier} For $x,y \in \mathbb{N}$ we have: $\text{gcd}(x,y)=1
  \Leftrightarrow \text{gcd}(x+y,y)=1$
\end{lem}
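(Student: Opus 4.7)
The plan is to prove both directions by the standard observation that any common divisor of two numbers also divides their sum and their difference, so that the set of common divisors of $\{x,y\}$ coincides with the set of common divisors of $\{x+y,y\}$. Once this is established, the two gcds are equal, and in particular one is $1$ iff the other is.

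For the direction $\gcd(x,y)=1\Rightarrow\gcd(x+y,y)=1$, I would take an arbitrary $d\in\mathbb{N}$ with $d\mid x+y$ and $d\mid y$. Since $d\mid y$ implies $d\mid(x+y)-y=x$, we get $d\mid x$ and $d\mid y$, hence $d\mid\gcd(x,y)=1$, so $d=1$. This forces $\gcd(x+y,y)=1$.

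For the converse $\gcd(x+y,y)=1\Rightarrow\gcd(x,y)=1$, I would similarly take $d\in\mathbb{N}$ with $d\mid x$ and $d\mid y$. Then $d\mid x+y$ as a linear combination, and combined with $d\mid y$ this gives $d\mid\gcd(x+y,y)=1$, so again $d=1$.

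There is really no obstacle here; the only ingredient beyond the definition of $\gcd$ is the elementary fact that a common divisor of two integers divides any integer linear combination of them, which in this case is just the trivial identity $x=(x+y)-y$. No appeal to the preceding lemmas is required, although the argument has exactly the flavour of the contradiction arguments used in Lemma \ref{lem;zwei} and Lemma \ref{lem;drei}.
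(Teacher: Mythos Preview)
Your proof is correct and is essentially identical to the paper's own argument: in each direction you take an arbitrary common divisor $d$ and use that it must divide the difference $(x+y)-y=x$ (respectively the sum $x+y$) to conclude $d=1$. The paper merely spells out the same subtraction and addition explicitly via $x+y=dm$, $y=dn$, but the logic is the same.
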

\begin{proof}
  First we prove:
  \begin{equation*}
    \text{gcd} (x,y)=1\Rightarrow \text{gcd} (x+y,y)=1
  \end{equation*}
  We assume there is a number $d$ for which we have $d|(x+y)$ and
  $d|y$. Now we only have to prove that $d=1$:
  \begin{align}
    x+y&=d\cdot{m}\label{eq:x+y}\\
    y&=d\cdot{n} \label{eq:y}\\
    \eqref{eq:x+y}-\eqref{eq:y}:\; x+y-y&=d\cdot{m}-d\cdot{n} \nonumber \\
    \Rightarrow \; x&=d\cdot{(m-n)}, \nonumber 
  \end{align}
  with $m,n \in \mathbb{N}$. So we know that $d|x$ and that $d|y$ and
  that $\text{gcd}(x,y)=1$, so $d=1$ and $\text{gcd}(x+y,y)=1.$\\\\

  Now we prove that $\text{gcd}(x,y)=1$ follows from
  $\text{gcd}(x+y,y)$.
  We assume that $d|x$ and $d|y$:
  \begin{align}
    x&=d\cdot{m}\label{eq:xn}\\
    y&=d\cdot{n} \label{eq:yn}\\
    \eqref{eq:xn}+\eqref{eq:yn}:\;x+y&=d\cdot{m}+d\cdot{n}\nonumber\\
    x+y&=d\cdot{(m+n)},\nonumber
  \end{align}
  with $m,n \in \mathbb{N}$. So we conclude that $d|(x+y)$ and $d|y$
  so $d=\text{gcd}(x,y)=1$.
\end{proof}

\begin{theo} For every $a_n$ of the Somos-5 sequence one finds:\\
  \textbf{a)}$\text{gcd}(a_n,a_{n-1})=\text{gcd}(a_n,a_{n-2})=1 \text{
    for every }n \geq 2.$\\
  \textbf{b)}$\text{gcd}(a_n,a_{n-1})=\text{gcd}(a_n,a_{n-2})=\text{gcd}(a_n,a_{n-3})=\text{gcd}(a_n,a_{n-4})=1
  {\text{ for every }n\geq 4.}$
\end{theo}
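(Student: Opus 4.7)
The strategy is strong induction on $n$, proving part (a) in full first and then using it in the proof of part (b). The workhorse is the recursion cleared of its denominator,
\begin{equation*}
  a_n\,a_{n-5} \;=\; a_{n-1}a_{n-4} + a_{n-2}a_{n-3},
\end{equation*}
which, assuming the $a_n$ are integers (proved elsewhere in the paper, or folded into the same induction), forces any common divisor $d$ of $a_n$ and $a_{n-k}$ for $k\in\{1,2,3,4\}$ to divide exactly one of the two products on the right-hand side. From there, Lemma~\ref{lem;zwei} together with gcd equalities at earlier indices will pinch $d$ down to $1$.

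For part (a) the base cases $n=2,3,4$ are immediate because $a_0=\cdots=a_4=1$. For the inductive step $n\ge 5$, set $d=\gcd(a_n,a_{n-1})$. Then $d\mid a_n a_{n-5}$ and $d\mid a_{n-1}a_{n-4}$, so $d\mid a_{n-2}a_{n-3}$. The inductive hypothesis applied at index $n-1$ supplies $\gcd(a_{n-1},a_{n-2})=\gcd(a_{n-1},a_{n-3})=1$, and since $d\mid a_{n-1}$ this yields $\gcd(d,a_{n-2})=\gcd(d,a_{n-3})=1$. Lemma~\ref{lem;zwei} then gives $\gcd(d,a_{n-2}a_{n-3})=1$, which combined with $d\mid a_{n-2}a_{n-3}$ forces $d=1$. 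The argument for $\gcd(a_n,a_{n-2})$ is entirely symmetric: the common divisor now divides $a_{n-1}a_{n-4}$, and one applies the inductive hypothesis at index $n-2$ to obtain $\gcd(a_{n-2},a_{n-1})=\gcd(a_{n-2},a_{n-4})=1$.

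For part (b) the base case $n=4$ is trivial. In the inductive step $n\ge 5$, the pairs $k=1,2$ are already handled by part (a), so only $k=3,4$ are new. For $d=\gcd(a_n,a_{n-3})$ the identity yields $d\mid a_{n-1}a_{n-4}$; part (a) at index $n-1$ gives $\gcd(a_{n-3},a_{n-1})=1$ and part (a) at index $n-3$ gives $\gcd(a_{n-3},a_{n-4})=1$, after which Lemma~\ref{lem;zwei} finishes the case. The case $k=4$ is analogous: $d=\gcd(a_n,a_{n-4})$ divides $a_{n-2}a_{n-3}$, and part (a) at indices $n-2$ and $n-3$ supplies $\gcd(a_{n-4},a_{n-2})=\gcd(a_{n-4},a_{n-3})=1$.

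The content of the argument is not deep; the only real obstacle is bookkeeping. One must check at each step that every gcd equality invoked from the inductive hypothesis actually sits at a strictly smaller index and is already covered by part (a), and in particular that part (a) is established in its entirety before any of it is used in the proof of part (b).
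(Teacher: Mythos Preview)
Your argument is correct. Both your proof and the paper's rest on the same identity $a_n a_{n-5}=a_{n-1}a_{n-4}+a_{n-2}a_{n-3}$ and the same inductive information, but the execution differs. The paper first assembles $\gcd(a_{n}a_{n-3},a_{n-1}a_{n-2})=1$ via Lemma~\ref{lem;drei}, then invokes Lemma~\ref{lem;vier} to replace $a_{n}a_{n-3}$ by the sum $a_{n+1}a_{n-4}$, and finally unpacks with Lemma~\ref{lem;drei} again. You instead fix a putative common divisor $d$, observe that it divides one summand and the sum, hence the other summand, and then kill it with coprimality of $d$ to each factor. Your route is marginally leaner: it never needs Lemma~\ref{lem;vier}, and part~(b) becomes a direct corollary of part~(a) rather than a parallel induction. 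One cosmetic slip: in the $\gcd(a_n,a_{n-2})$ case you attribute $\gcd(a_{n-2},a_{n-1})=1$ to ``the inductive hypothesis at index $n-2$,'' but that equality is actually the statement at index $n-1$; under strong induction both facts are available, so this does not affect validity.
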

\begin{proof}
  \textbf{a)} We prove this by induction:\\
  We can easily see that the above is true for $4\geq n \geq 2$ since those
  are all initial values.
  \\\\
  Assume that the above is true for three consecutive $n\geq 2$:
  \begin{equation}\label{eq;first}
    \text{gcd}(a_n,a_{n-1})=\text{gcd}(a_n,a_{n-2})=1
  \end{equation}
  We want to show that
  \begin{equation*}
    \text{gcd}(a_{n+1},a_{n})=\text{gcd}(a_{n+1},a_{n-1})=1
  \end{equation*}
  
  Begin with our assumption, equation \eqref{eq;first}:
  \begin{equation*}
    \text{gcd} (a_{n},a_{n-1})=\text{gcd} (a_{n},a_{n-2})=\text{gcd} (a_{n-1},a_{n-3})=\text{gcd} (a_{n-2},a_{n-3})=1
  \end{equation*}

  We use Lemma \ref{lem;drei} to prove:
  \begin{equation*}
    \text{ggT} (a_{n}a_{n-3},a_{n-1}a_{n-2})=1
  \end{equation*}
  We use Lemma \ref{lem;vier} with $x:=a_{n}a_{n-3}$ and
  $y:=a_{n-1}a_{n-2}$ to prove:
  \begin{align}
     1 &=\text{ggT}(a_{n}a_{n-3}+a_{n-1}a_{n-2}, a_{n-1}a_{n-2})\nonumber\\
      &\stackrel{\text{Def \ref{def;def}}}{=}\text{ggT} (a_{n+1}a_{n-4}, a_{n-1}a_{n-2})\label{theo:a+1,1}\\
       &\stackrel{\text{Lem \ref{lem;drei}}}{=}\text{ggT}(a_{n+1},a_{n-1})\nonumber
  \end{align}
  Now we prove $\text{gcd}(a_{n+1},a_n)=1$. We take from above:
  \begin{equation*}
    \text{ggT} (a_{n}a_{n-3},a_{n-1}a_{n-2})=1
  \end{equation*}
  With Lemma \ref{lem;vier} with $y:=a_{n}a_{n-3}$ and
  $x:=a_{n-1}a_{n-2}$ we get:
  \begin{align}
    1&=\text{ggT}(a_{n}a_{n-3}+a_{n-1}a_{n-2}, a_{n}a_{n-3})\nonumber\\
     &\stackrel{\text{Def \ref{def;def}}}{=}\text{ggT} (a_{n+1}a_{n-4}, a_{n}a_{n-3})\label{theo:a+1,2}\\
    &\stackrel{\text{Lem \ref{lem;drei}}}{=}\text{ggT} (a_{n+1},a_{n})\nonumber
  \end{align}\\
  \textbf{So we have proven that in the Somos-5 sequence, each $a_n$ is
    relatively prime to its two predecessors.}
  \\\\
  The proof for \textbf{b)} is also via induction.
  We can easily see that \textbf{b)} is true for $6 \geq n \geq 4$
  \\\\
  Assume that \textbf{b)} is true for three consecutive $n\geq 4$:
  \begin{equation}\label{equ:asum}
    \text{gcd}(a_n,a_{n-1})=\text{gcd}(a_n,a_{n-2})=\text{gcd}(a_n,a_{n-3})=\text{gcd}(a_n,a_{n-4})=1
  \end{equation}
  We want to show that:
  \begin{equation*}
    \text{gcd}(a_{n+1},a_{n})=\text{gcd}(a_{n+1},a_{n-1})=\text{gcd}(a_{n+1},a_{n-2})=\text{gcd}(a_{n+1},a_{n-3})=1
  \end{equation*}
  The proof for $a_n$ being relatively prime to its first two
  predecessors has already been established in \textbf{a)}, so we only
  have to prove that
  \begin{equation*}
    \text{gcd}(a_n,a_{n-3})=\text{gcd}(a_n,a_{n-4})=1
  \end{equation*}
  Begin with our assumption, equation \eqref{equ:asum}:
  \begin{equation*}
    \text{gcd} (a_{n},a_{n-1})=\text{gcd} (a_{n},a_{n-2})=\text{gcd} (a_{n-1},a_{n-3})=\text{gcd} (a_{n-2},a_{n-3})=1
  \end{equation*}
  We use Lemma \ref{lem;drei} to prove:
  \begin{equation*}
    \text{ggT} (a_{n}a_{n-3},a_{n-1}a_{n-2})=1 \label{eq;die}
  \end{equation*}
  We use Lemma \ref{lem;vier} with $x:=a_{n}a_{n-3}$ and
  $y:=a_{n-1}a_{n-2}$ to prove:
  \begin{align}
     1 &=\text{ggT}(a_{n}a_{n-3}+a_{n-1}a_{n-2}, a_{n-1}a_{n-2})\nonumber\\
      &\stackrel{\text{Def \ref{def;def}}}{=}\text{ggT} (a_{n+1}a_{n-4}, a_{n-1}a_{n-2})\nonumber\\
       &\stackrel{\text{Lem \ref{lem;drei}}}{=}\text{ggT}(a_{n+1},a_{n-2})\nonumber
  \end{align}
  Now we prove $\text{gcd}(a_{n+1},a_{n-3})=1$. We take from equation \eqref{eq;die}:
  \begin{equation*}
    \text{ggT} (a_{n}a_{n-3},a_{n-1}a_{n-2})=1
  \end{equation*}
  With Lemma \ref{lem;vier} with $y:=a_{n}a_{n-3}$ and
  $x:=a_{n-1}a_{n-2}$ we get:
  \begin{align}
    1&=\text{ggT}(a_{n}a_{n-3}+a_{n-1}a_{n-2}, a_{n}a_{n-3})\nonumber\\
     &\stackrel{\text{Def \ref{def;def}}}{=}\text{ggT} (a_{n+1}a_{n-4}, a_{n}a_{n-3})\nonumber\\
    &\stackrel{\text{Lem \ref{lem;drei}}}{=}\text{ggT} (a_{n+1},a_{n-3})\nonumber
  \end{align}\\
  \textbf{So we have proven that in the Somos-5 sequence, each $a_n$ is
    relatively prime to its four predecessors.}
  
\end{proof}

\begin{lem} Given an equation $y=a\pmod{z}$ and another equation \label{lem;gleich}
  $y\cdot b=c\pmod{z}$ and $\text{gcd}(b,z)=1$. We have:
   \begin{equation*}
    \begin{split}
      \text{If } y=0\pmod{z} \Leftrightarrow y\cdot b =0\pmod{z} \\
    \text{ and if } y\neq 0 \pmod{z} \Leftrightarrow y\cdot b \neq
    0\pmod{z}
    \end{split}
  \end{equation*}
\end{lem}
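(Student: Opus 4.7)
The plan is to observe that once the congruence notation is stripped away, the statement reduces to: if $\gcd(b,z)=1$, then $z \mid y \iff z \mid yb$. The hypotheses $y \equiv a \pmod z$ and $yb \equiv c \pmod z$ are just naming $y$ and $yb$ modulo $z$; they carry no extra content beyond the coprimality assumption on $b$. So I would split the biconditional into its two directions, noting that the second assertion (the $\neq 0$ version) is simply the contrapositive of the first and therefore requires no separate proof.

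First I would prove $y \equiv 0 \pmod z \Rightarrow yb \equiv 0 \pmod z$. This is immediate: if $z \mid y$ then there is some $k \in \mathbb{N}$ with $y = kz$, hence $yb = (kb)z$, so $z \mid yb$, which gives $yb \equiv 0 \pmod z$. The hypothesis $\gcd(b,z)=1$ is not needed for this direction.

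Next I would prove the converse $yb \equiv 0 \pmod z \Rightarrow y \equiv 0 \pmod z$, which is where the coprimality is essential. From $z \mid yb$ together with $\gcd(z,b)=1$, Euclid's Lemma (Lemma 1, applied with $d := z$, $m := b$, $n := y$) yields $z \mid y$, that is $y \equiv 0 \pmod z$. Combining the two directions establishes the first biconditional; the contrapositive of the whole biconditional is exactly the second statement $y \not\equiv 0 \pmod z \iff yb \not\equiv 0 \pmod z$.

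Since the argument is a two-line invocation of Euclid's Lemma, there is no real obstacle. The only thing to be careful about is not being misled by the auxiliary data $a$ and $c$ in the statement, which play no role in the proof and should simply be ignored. One might also briefly remark that the implicit assumption $y, b \in \mathbb{N}$ (so that $yb \in \mathbb{N}$) is what allows Euclid's Lemma to apply directly.
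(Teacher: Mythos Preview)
Your proposal is correct and follows essentially the same approach as the paper: both reduce the statement to the equivalence $z\mid y \Leftrightarrow z\mid yb$ under $\gcd(b,z)=1$, with the nontrivial direction handled by Euclid's Lemma. Your version is somewhat cleaner in that you explicitly observe the second biconditional is the contrapositive of the first, whereas the paper argues the two cases separately (the $\not\equiv 0$ case informally via ``none of the divisors of $z$ is present in $b$'', then the $\equiv 0$ case together with the Euclid's Lemma implication).
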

\begin{proof}
  Assume $a\neq 0$ then $y\neq 0\pmod{z}$ and because $b$ and $z$ are
  relatively prime $c\neq 0$, because none of the divisors of $z$ is
  present in $b$.
  \\\\
  If $a=0$ and $y\pmod{z}=0$, then it doesn't matter which natural
  number we multiply $y$ with the solution $c$ will always be 0. This
  simply follows from Euclid's Lemma:
  \begin{equation*}
    z|yb \text{ and } \text{gcd}(z,b)=1 \Rightarrow z|y
  \end{equation*}
\end{proof}
\begin{theo} For every $a_n$ of the Somos-5 sequence, we have
  ${a_{n-1}a_{n-4}+a_{n-2}a_{n-3}=0 \pmod{a_{n-5}}}$ and because of that
  one finds that every $a_n$ is a natural number.
\end{theo}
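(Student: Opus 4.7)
The plan is to prove the divisibility claim $a_{n-5}\mid a_{n-1}a_{n-4}+a_{n-2}a_{n-3}$ by strong induction on $n$; once this is settled, integrality (and positivity) of $a_n$ follows at once from \eqref{def;def}. The base cases $5\le n\le 9$ are trivial because $a_{n-5}=1$ throughout that range. For the inductive step I assume all of $a_0,\dots,a_{n-1}$ are already natural numbers, so that Theorem 1(b) is available; in particular $a_{n-5}$ is coprime to each of $a_{n-1},\dots,a_{n-4}$ and, by applying Theorem 1(b) at index $n-5$ itself, also to each of $a_{n-6},\dots,a_{n-9}$.

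The central device is to read off \eqref{def;def} at the five consecutive indices $n-1,n-2,n-3,n-4,n-5$ and reduce each equation modulo $a_{n-5}$. In the first four of those identities, exactly one right-hand-side term carries a factor of $a_{n-5}$ and therefore vanishes; in the last, the left-hand side itself is $a_{n-5}a_{n-10}\equiv 0$. This produces
\begin{align*}
  a_{n-1}a_{n-6} &\equiv a_{n-3}a_{n-4}\pmod{a_{n-5}},\\
  a_{n-2}a_{n-7} &\equiv a_{n-3}a_{n-6}\pmod{a_{n-5}},\\
  a_{n-3}a_{n-8} &\equiv a_{n-4}a_{n-7}\pmod{a_{n-5}},\\
  a_{n-4}a_{n-9} &\equiv a_{n-6}a_{n-7}\pmod{a_{n-5}},\\
  a_{n-6}a_{n-9}+a_{n-7}a_{n-8} &\equiv 0\pmod{a_{n-5}}.
\end{align*}

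From here I would chain three cancellation steps, each using Lemma \ref{lem;gleich} to strip off a factor coprime to $a_{n-5}$. Writing $S:=a_{n-1}a_{n-4}+a_{n-2}a_{n-3}$, multiplication by $a_{n-6}$ together with the first congruence reduces the goal $S\equiv 0$ to $a_{n-4}^{2}+a_{n-2}a_{n-6}\equiv 0$. Multiplying that by $a_{n-7}a_{n-8}$ and feeding in the second and third congruences reduces it further to $a_{n-4}a_{n-8}+a_{n-6}^{2}\equiv 0$. A final multiplication by $a_{n-9}$, using the fourth congruence, leaves $a_{n-6}a_{n-9}+a_{n-7}a_{n-8}\equiv 0$, which is precisely the fifth congruence above. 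Unwinding the three cancellations delivers the required $S\equiv 0\pmod{a_{n-5}}$.

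The main obstacle is bookkeeping rather than arithmetic insight: every cancellation step requires me to check that the factor being peeled off is coprime to $a_{n-5}$, and the chain drags in indices as far away as $n-9$. This is exactly what Theorem 1(b) supplies, but it must be applied symmetrically on \emph{both} sides of $a_{n-5}$, which is a subtlety easy to overlook. A secondary concern is that the last congruence in the display above needs $a_{n-10}$ to exist, which is the reason the range of explicit base cases has to extend to $n=9$ before the uniform inductive mechanism takes over.
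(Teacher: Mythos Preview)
Your proposal is correct and follows essentially the same route as the paper: both arguments reduce the five index-shifted instances of \eqref{def;def} modulo $a_{n-5}$, exploit Theorem~1(b) for the coprimality of $a_{n-5}$ with $a_{n-6},\dots,a_{n-9}$, and terminate at $a_{n-6}a_{n-9}+a_{n-7}a_{n-8}=a_{n-5}a_{n-10}\equiv 0$. The only cosmetic difference is organisational---the paper multiplies once by $a_{n-8}a_{n-9}$ and substitutes forward, whereas you split the computation into three cancellation steps---but the underlying identities and coprimality inputs are identical.
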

\begin{proof}
  This proof is also via induction. One also has to know a few
  index shifts of Definition \ref{def;def}:
  \begin{equation*}
    \begin{split}
      a_{n-5}a_{n-10}=a_{n-6}a_{n-9}+a_{n-7}a_{n-8}\\
      a_{n-4}a_{n-9}=a_{n-5}a_{n-8}+a_{n-6}a_{n-7}\\
      a_{n-3}a_{n-8}=a_{n-4}a_{n-7}+a_{n-5}a_{n-6}\\
      a_{n-2}a_{n-7}=a_{n-3}a_{n-6}+a_{n-4}a_{n-5}\\
      a_{n-1}a_{n-6}=a_{n-2}a_{n-5}+a_{n-3}a_{n-4}\\
    \end{split}
  \end{equation*}
It is easy to prove that the first ten elements of the Somos-5
sequence are integers just by looking at them.

Assume that there is an $n>9$ for which all $a_k\in\mathbb{N}$ with
$k \leq n-1$.\\
We want to prove that $a_n\in\mathbb{N}$.
So we want to know if
\begin{equation*}
  a_{n-1}a_{n-4}+a_{n-2}a_{n-3}=0\pmod{a_{n-5}}.
\end{equation*}

According to Lemma \ref{lem;gleich}:
\begin{equation*}
  a_{n-1}a_{n-4}+a_{n-2}a_{n-3}\pmod{a_{n-5}}=a_{n-8}a_{n-9}(a_{n-1}a_{n-4}+a_{n-2}a_{n-3})\pmod{a_{n-5}}
\end{equation*}
because
\begin{equation*}
  \text{gcd}(a_{n-5},a_{n-8})=\text{gcd}(a_{n-5},a_{n-9})=1\stackrel{Lem. \ref{lem;zwei}}{=}\text{gcd}(a_{n-5},a_{n-8}a_{n-9}).
\end{equation*}

So now we have:
\begin{equation}\label{eq:sub}
    \begin{split}
      &a_{n-8}a_{n-9}(a_{n-1}a_{n-4}+a_{n-2}a_{n-3})\\
      &=a_{n-8}a_{n-9}a_{n-1}a_{n-4}+a_{n-8}a_{n-9} a_{n-2}a_{n-3}\\
      &=a_{n-8}a_{n-1}(a_{n-5}a_{n-8}+a_{n-6}a_{n-7})+a_{n-9}
      a_{n-2}(a_{n-4}a_{n-7}+a_{n-5}a_{n-6})\\
      &=a_{n-8}a_{n-1}a_{n-6}a_{n-7}+0+a_{n-9}a_{n-2}a_{n-4}a_{n-7}+0\\
      &=a_{n-8}a_{n-7}(a_{n-2}a_{n-5}+a_{n-3}a_{n-4})+a_{n-9}a_{n-4}(a_{n-3}a_{n-6}+a_{n-4}a_{n-5})\\
      &=a_{n-8}a_{n-7}a_{n-3}a_{n-4}+0+a_{n-9}a_{n-4}a_{n-3}a_{n-6}+0\\
      &=a_{n-3}a_{n-4}(a_{n-8}a_{n-7}+a_{n-9}a_{n-6})\\
      &=a_{n-3}a_{n-4}a_{n-5}a_{n-10}=0 \pmod{ a_{n-5}}\\
  \end{split}
\end{equation}

\textbf{So we have proven that in the Somos-5 sequence, every $a_n\in\mathbb{N}$}
\end{proof}
\pagebreak
\renewcommand{\notesname}{References}

\begingroup
     \parindent 0pt
     \parskip 5ex
     \def\enotesize{\normalsize}
     \theendnotes
\endgroup 
\end{document}